\newtheorem{theorem}{Theorem}
\newtheorem{lemma}[theorem]{Lemma}
\begin{document}
\title{A RECURRENCE RELATION ASSOCIATED WITH UNIT-PRIMITIVE MATRICES}
\author{Byeong-Gil Choe\footnote{Chonnam National University, heakum2@gmail.com}, Hyeong-Kwan Ju\footnote{Chonnam National University, hkju@jnu.ac.kr} \footnote{corresponding author, Department of Mathematics, Chonnam National University, Gwangju, Republic of Korea} }
\date{\today}
\maketitle
\begin{abstract}
In this paper we obtained several properties that the characteristic polynomials of the unit-primitive matrix satisfy. In addition, using these properties we have shown that the recurrence relation given as in the formula \eqref{CP} is true. In fact, Xin and Zhong(\cite{XIN_ZHONG}) showed it earlier. However, we provide simpler method here.  
\end{abstract}

\section{Introduction}

The unit-primitive matrix comes naturally when  computing discrete volumes of certain graph polytopes. Below, the related terms and results will be explained.\\
For a given positive integer $m$, we let $B(m)=(b_{ij})$ $(1\leq i, j\leq m)$ be a square matrix of size $m\times m$ satisfying

$$
b_{ij}= \begin{cases}1&i+j \leq m+1\\0&\text{otherwise}\end{cases}
$$

We call this type of upper triangular matrix a \textbf{unit-primitive matrix} of size $m$. For example, unit-primitive matrix of size 5 and its inverse matrix are as follows.

$$
B(5)=
\begin{pmatrix*}[r]
1&1&1&1&1\\
1&1&1&1&0\\
1&1&1&0&0\\
1&1&0&0&0\\
1&0&0&0&0
\end{pmatrix*}, \quad
B(5)^{-1}=
\begin{pmatrix*}[r]
0&0&0&0&1\\
0&0&0&1&-1\\
0&0&1&-1&0\\
0&1&-1&0&0\\
1&-1&0&0&0
\end{pmatrix*}
$$

If $M$ is a square matrix, we denote the sum of all entries of $M$ by $s(M)$. So, $s(M)=u^t M u$ for the column vector $u$ all of whose entries are 1.
We define a bi-variate sequence $b(n,m)(n, m \geq 0)$ as follows.
$$ \quad b(n,m)= \begin{cases}1&\text{$n$=0 or $m$=0}\\s\big( B(m+1)^{n-1} \big) &\text{$n$ and $m$ positive}\end{cases}
$$

\begin{table}
\centering
\begin{tabular}{|c|c c c c c c c c c c c|}
    \hline
    \diagbox{$n$}{$m$} & 0 & 1 & 2 & 3 & 4 & 5 & 6 & 7 & 8 & 9 & $\cdots$ \\ \hline
    0 & 1 & 1 & 1 & 1 & 1 & 1 & 1 & 1 & 1 & 1 & $\cdots$ \\
    1 & 1 & 2 & 3 & 4 & 5 & 6 & 7 & 8 & 9 & 10 & $\cdots$ \\
    2 & 1 & 3 & 6 & 10 & 15 & 21 & 28 & 36 & 45 & 55 & $\cdots$ \\
    3 & 1 & 5 & 14 & 30 & 55 & 91 & 140 & 204 & 285 & 385 & $\cdots$ \\
    4 & 1 & 8 & 31 & 85 & 190 & 371 & 658 & 1086 & 1695 & 2530 & $\cdots$ \\
    5 & 1 & 13 & 70 & 246 & 671 & 1547 & 3164 & 5916 & 10317 & 17017 & $\cdots$ \\
    $\vdots$ & $\vdots$ & $\vdots$ & $\vdots$ & $\vdots$ & $\vdots$ & $\vdots$ & $\vdots$ & $\vdots$ & $\vdots$ & $\vdots$ &  \\ \hline  
\end{tabular}
  \caption{Table of $b(n,m)$}
  \label{Table of $b(n,m)$}
\end{table}

Our main concern is that the following recurrence relation holds for the sequence $(b(n,m))_{n,m\geq 0}$.\\
\begin{equation}\label{CP}
    b(n,m)=b(n,m-1)+\sum_{k\geq 0} b(2k, m-1)b(n-1-2k, m)
\end{equation}
\\
This number appeared in chemistry as the number of Kekul\'{e} structures of the benzenoid hydrocarbons.(See \cite{CY_GUT},  \cite{XIN_ZHONG} for details.). It is also listed with an id A050446 in \cite{OEIS}).
Let $F_m (x)=\sum_{n\geq 0}b(n,m)x^n$, $G_n (x)=\sum_{m\geq 0}b(n,m)x^m$, $Q_m (x)$
=$ \det{(I-xB(m))}$ and $R_m (x)=\det {\begin{pmatrix} 0&u^t\\-u&I-xB(m) \end{pmatrix}}$, where $u=(1,1,\cdots,1)^t \in \mathbb{R}^m$.\\
The sequence $(G_n(y))_{n\geq 0}$ of the Ehrhart series is well analyzed in detail by Xin and Zhong(\cite{XIN_ZHONG}). We want to describe our main results in a slightly different way, however.  Now, let $M^*$ be an adjugate matrix of the square matrix $M$. 
\begin{lemma}
$s(M^*)=\det{\begin{pmatrix} 0&u^t\\-u&M \end{pmatrix}}$ for all matrix M of size $n \times n$. 
\end{lemma}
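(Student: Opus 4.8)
The plan is to compute the determinant on the right-hand side by cofactor expansion along the first row (or first column) and recognize each cofactor as an entry of the adjugate matrix $M^*$. Write the bordered matrix as $N = \begin{pmatrix} 0 & u^t \\ -u & M \end{pmatrix}$, a matrix of size $(n+1)\times(n+1)$ whose rows and columns I index by $0,1,\dots,n$. Expanding $\det N$ along row $0$, the $(0,0)$ entry is $0$ and contributes nothing, so $\det N = \sum_{j=1}^{n} (-1)^{j} \, (N)_{0j} \, \det N_{0j} = \sum_{j=1}^n (-1)^j \det N_{0j}$, since each $(N)_{0j} = u_j = 1$. Here $N_{0j}$ denotes the minor obtained by deleting row $0$ and column $j$.

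The key step is to identify $\det N_{0j}$. Deleting row $0$ and column $j$ from $N$ leaves an $n\times n$ matrix whose first column is $-u$ (the old column $0$ of $N$, restricted to rows $1,\dots,n$) and whose remaining columns are columns $1,\dots,\widehat{j},\dots,n$ of $M$. I would then expand this minor along its first column $-u$; alternatively, and more cleanly, I would move the first column to the right past the appropriate number of columns to reconstruct a matrix of the form $M$ with one column replaced by $-u$, tracking the sign $(-1)^{j-1}$ from the column transpositions. Either way one obtains $\det N_{0j} = (-1)^{j-1}\sum_{i=1}^n (-1)^{i-1}(-1)\det M_{ij} = (-1)^{j}\sum_{i=1}^n (-1)^{i-1}\det M_{ij}$ up to bookkeeping of signs, where $M_{ij}$ is the minor of $M$ deleting row $i$ and column $j$. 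Plugging back in, the factors of $(-1)^j$ cancel and we get $\det N = \sum_{i,j} (-1)^{i+j}\det M_{ij} = \sum_{i,j} (M^*)_{ji} = s(M^*)$, using that the $(j,i)$ entry of the adjugate is the $(i,j)$ cofactor $(-1)^{i+j}\det M_{ij}$ and that summing over all $i,j$ is insensitive to the transpose.

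A cleaner alternative I would present instead, to avoid the sign bookkeeping entirely, is to use the identity $\det\begin{pmatrix} a & v^t \\ w & M\end{pmatrix} = a\det M - v^t M^* w$ when... this does not hold in general, so I would instead argue by a perturbation/limiting argument: for invertible $M$, the Schur complement formula gives $\det N = \det M \cdot \det(0 - u^t M^{-1}(-u)) = \det M \cdot (u^t M^{-1} u)$, and since $M^{-1} = M^*/\det M$ this equals $u^t M^* u = s(M^*)$. Both sides of the claimed identity are polynomials in the entries of $M$, so the equality on the dense set of invertible matrices extends to all $M$ by continuity (or by the principle of irrelevance of algebraic inequalities). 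The main obstacle in the direct cofactor approach is keeping the signs straight through the double expansion; the Schur-complement-plus-density argument sidesteps this, and I expect to use it as the primary proof, perhaps remarking on the cofactor interpretation afterward.
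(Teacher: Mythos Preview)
Your proposal is correct. The cofactor-expansion route you sketch is essentially the paper's argument run in reverse: the paper starts from $s(M^*)=u^tM^*u$, uses Cramer's rule to write $M^*u=(M_1,\dots,M_n)^t$ with $M_j=\det(m_1,\dots,m_{j-1},u,m_{j+1},\dots,m_n)$, and then recognizes $\sum_j M_j$ (after shifting $u$ to the first column in each summand) as the first-row cofactor expansion of the bordered determinant. That single appeal to Cramer avoids the double expansion whose sign bookkeeping you flagged as the main obstacle; indeed your displayed intermediate expression for $\det N_{0j}$ conflates the factor $(-1)^{j-1}$ from the column-moving variant with the signs from the direct first-column expansion---the correct value via the latter is simply $\det N_{0j}=\sum_i(-1)^{i}\det M_{ij}$, with no extra $(-1)^{j-1}$---though your final conclusion comes out right. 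Your Schur-complement-plus-density argument is a genuinely different route not taken in the paper; it is shorter and slicker, at the cost of a continuity/Zariski-density step, whereas the paper's proof is purely combinatorial and valid verbatim over any commutative ring.
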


\begin{proof}

Let $m_i$ be the $i$th column of the matrix $M$, and 

$$M_i= \det (m_1, \cdots ,m_{i-1},u,m_{i+1}, \cdots ,m_n).$$ Then
 \begin{align*}
 s(M^*)=&u^tM^*u=u^t[M_1,M_2,\cdots ,M_n]^t \\
       =& \det (u,m_2,m_3,\cdots ,m_n)  + \det (m_1,u,m_3,\cdots ,m_n) \\ 
        &+\cdots + \det (m_1,m_2,m_3,\cdots ,u) \\
       =& \det (u,m_2,m_3,\cdots ,m_n)- 
          \det (u,m_1,m_3,\cdots ,m_n) \\ 
        & +\cdots +(-1)^{n+1} \det (u,m_1,m_2,\cdots ,m_{n-1}) \\
      = &\ \det {\begin{pmatrix} 0&u^t\\-u&M \end{pmatrix}}.
\end{align*}
\end{proof}
\begin{theorem}
Let $E_m(x)= \sum_{n\geq 0}b(n+1,m)x^n$. Then
$$ E_m(x)=\frac{\det \begin{pmatrix} 0 & u^t \\ -u & I-xB(m+1) \end{pmatrix}}{\det(I_{m+1}-xB(m+1))}=\frac{R_{m+1}(x)}{Q_{m+1}(x)}$$
\end{theorem}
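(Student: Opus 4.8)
The plan is to express $b(n+1,m)=s\bigl(B(m+1)^{n}\bigr)$ in terms of the adjugate of $I-xB(m+1)$ via the geometric series, and then invoke the Lemma to convert the sum of entries of that adjugate into the bordered determinant $R_{m+1}(x)$. First I would write, for $n\geq 0$,
$$ b(n+1,m)=s\bigl(B(m+1)^{n}\bigr)=u^{t}B(m+1)^{n}u, $$
which also holds at $n=0$ since $s(I_{m+1})=m+1$? — here I need to be careful, because $b(1,m)$ is defined as $s(B(m+1)^{0})=s(I_{m+1})$, and the convention in the table gives $b(1,m)=m+1$, so indeed $u^tI_{m+1}u=m+1$ is consistent. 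Thus $E_m(x)=\sum_{n\geq 0}u^{t}B(m+1)^{n}u\,x^{n}=u^{t}\bigl(\sum_{n\geq 0}x^{n}B(m+1)^{n}\bigr)u$.

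Next I would observe that $\sum_{n\geq 0}x^{n}B(m+1)^{n}=\bigl(I_{m+1}-xB(m+1)\bigr)^{-1}$ as a formal power series in $x$ (the inverse exists over $\mathbb{R}[[x]]$ since the constant term is $I_{m+1}$), so
$$ E_m(x)=u^{t}\bigl(I_{m+1}-xB(m+1)\bigr)^{-1}u. $$
Now I use the adjugate identity $N^{-1}=\dfrac{N^{*}}{\det N}$ with $N=I_{m+1}-xB(m+1)$, giving
$$ E_m(x)=\frac{u^{t}N^{*}u}{\det N}=\frac{s(N^{*})}{\det N}. $$
Finally, applying the Lemma with $M=N=I_{m+1}-xB(m+1)$ yields $s(N^{*})=\det\begin{pmatrix}0&u^{t}\\-u&I_{m+1}-xB(m+1)\end{pmatrix}=R_{m+1}(x)$, while $\det N=Q_{m+1}(x)$, which is exactly the claimed formula.

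The only genuinely delicate point is the justification that the formal geometric series $\sum_{n\geq0}x^nB(m+1)^n$ equals $(I-xB(m+1))^{-1}$ and that all the manipulations (the adjugate formula, the Lemma) remain valid entrywise in the ring $\mathbb{R}[[x]]$ rather than just over $\mathbb{R}$; this is routine but should be stated, since $\det N = Q_{m+1}(x)$ is a nonzero polynomial with nonzero constant term, hence a unit in $\mathbb{R}[[x]]$, so dividing by it is legitimate. A secondary bookkeeping check is that the $n=0$ term matches the definition of $b(1,m)$, which I noted above. Everything else is a direct substitution, so I expect no real obstacle beyond phrasing these formal-power-series caveats cleanly.
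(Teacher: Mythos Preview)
Your proof is correct and follows essentially the same route as the paper: express $E_m(x)$ via the geometric series as $u^t(I-xB(m+1))^{-1}u$, rewrite the inverse using the adjugate over $\det$, and then apply the Lemma to identify $s((I-xB(m+1))^*)$ with the bordered determinant $R_{m+1}(x)$. Your added remarks about working in $\mathbb{R}[[x]]$ and checking the $n=0$ term are welcome care that the paper omits, but the argument is otherwise identical.
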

\begin{proof}
\begin{align*}
   E_m(x) & =\sum_{n\geq 0}b(n+1,m)x^n=\sum_{n\geq 0}s(B(m+1)^n)x^n \\
          & =s\left(\sum_{n\geq 0}(xB(m+1))^n\right)=s\left[I-xB(m+1)^{-1} \right] \\
          & =s\left(\frac{(I-xB(m+1))^*}{\det (I-xB(m+1))}\right)=\frac{1}{Q_{m+1}(x)}s((I-xB(m+1))^*) \\
          & =\frac{1}{Q_{m+1}} \left[ u^t(I-xB(m+1))^*u \right] \\
          & =\frac{1}{Q_{m+1}(x)}\det \begin{pmatrix}
     0 & u^t \\
     -u & I-xB(m+1)
 \end{pmatrix}
 =\frac{R_{m+1}(x)}{Q_{m+1}(x)}. 
\end{align*} 
\end{proof}
\section{Properties of $Q_m(x)$}
In this section we list the properties of $Q_m(x)$ and prove them.
\begin{theorem}\label{thm3}
$Q_m(x)=\det (I-xB(m))$ satisfies the following properties.
\begin{align*}
(1)& \quad Q_m(x)=-x Q_{m-1}(-x)+Q_{m-2}(x) \quad (m\geq 2), \text{ and} \\
   & \quad Q_0(x)=1, Q_1(x)=1-x.\\
(2)& \quad Q_m(x)Q_{m+1}(x)-Q_m(-x)Q_{m+1}(-x)=2 \quad (m\geq 0). \\
(3)& \quad Q_{m+1}(x)Q_{m+1}(-x)-Q_{m+2}(x)Q_m(-x)=x \quad (m\geq 0). \\
(4)& \quad Q_m(x)=x R_m(x)=Q_{m-1}(-x) \quad (m\geq 1).\\
\end{align*} 
\end{theorem}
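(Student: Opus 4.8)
The substance of the theorem is (1): once it is in hand, (2) and (3) are short inductions on $m$, and (4) follows from (1) together with the Lemma. So let me concentrate on (1), where I expect the one real obstacle to lie. The cases $m=0,1$ are immediate ($Q_0=1$, $Q_1=1-x$). For $m\ge 2$ put $A=I-xB(m)$. Consecutive rows of $B(m)$ differ by a single basis vector, $B(m)_{i,\cdot}-B(m)_{i+1,\cdot}=e_{m+1-i}^{t}$ for $1\le i\le m-1$, so left-multiplying $A$ by the unit upper-bidiagonal matrix $L$ carrying $-1$'s on the superdiagonal (so $\det L=1$ and $(LA)_{i,\cdot}=A_{i,\cdot}-A_{i+1,\cdot}$, with $A_{m+1,\cdot}:=0$) produces a matrix $A'$ with $\det A'=Q_m(x)$ whose row $i$ equals $e_i^{t}-e_{i+1}^{t}-xe_{m+1-i}^{t}$ for $1\le i\le m-1$ and whose last row equals $e_m^{t}-xe_1^{t}$. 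The first column of $A'$ is $(1,0,\dots,0,-x)^{t}$, so expanding the determinant along it gives $Q_m(x)=M_{1,1}+(-x)(-1)^{m+1}M_{m,1}$, where $M_{i,1}$ denotes the minor of $A'$ obtained by deleting row $i$ and column $1$.

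It remains to evaluate those two minors. Deleting row $1$ and column $1$ commutes with the surviving row operations, so $M_{1,1}$ equals the corresponding minor of $A$ itself; and deleting the first row and column of $B(m)$ leaves $\big([\,i+j\le m-1\,]\big)_{i,j}$, which is $B(m-2)$ bordered by a zero row and column, whence $M_{1,1}=\det(I-xB(m-2))=Q_{m-2}(x)$. For $M_{m,1}$ I would take the $(m-1)\times(m-1)$ matrix obtained from $A'$ by deleting row $m$ and column $1$ and left-multiply it by the lower-triangular all-ones matrix (determinant $1$) — i.e.\ replace each row by the partial sum of the rows up to and including it — after which a direct computation yields $-(I+x\widetilde B)$ with $\widetilde B_{i,j}=[\,i+j\ge m\,]$. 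Conjugating $\widetilde B$ by the reversal permutation turns it into $\big([\,i+j\le m\,]\big)_{i,j}=B(m-1)$, so $M_{m,1}=(-1)^{m-1}\det(I+xB(m-1))=(-1)^{m-1}Q_{m-1}(-x)$. Substituting both values into the expansion gives $Q_m(x)=Q_{m-2}(x)-xQ_{m-1}(-x)$. Pinning down the precise form of $M_{m,1}$ — and both "bordering/reversal" identifications — is the step at which I would be most careful.

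For (2) and (3) put $\overline Q_k:=Q_k(-x)$; from (1) one gets $Q_k=-x\overline Q_{k-1}+Q_{k-2}$, hence also $\overline Q_k=xQ_{k-1}+\overline Q_{k-2}$ and $Q_k+x\overline Q_{k-1}=Q_{k-2}$, and one inducts on $m$. (As typeset the left side of (2) is an odd function of $x$, so the "$-$" should be "$+$"; the identity that holds is $Q_mQ_{m+1}+\overline Q_m\overline Q_{m+1}=2$.) For (2), substitute $Q_{m+1}=-x\overline Q_m+Q_{m-1}$ and $\overline Q_{m+1}=xQ_m+\overline Q_{m-1}$: the two $\pm xQ_m\overline Q_m$ terms cancel and one is left with $Q_{m-1}Q_m+\overline Q_{m-1}\overline Q_m$, closing the induction from the base $m=0$ (value $(1-x)+(1+x)=2$). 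For (3), the same substitutions (again using $Q_k+x\overline Q_{k-1}=Q_{k-2}$) reduce $Q_{m+1}\overline Q_{m+1}-Q_{m+2}\overline Q_m$ to $\overline Q_{m+1}Q_{m-1}-Q_m\overline Q_m$; invoking the previous case of (3) together with its image under $x\mapsto -x$ — whose difference is the companion identity $Q_{m-1}(x)Q_{m+1}(-x)-Q_{m+1}(x)Q_{m-1}(-x)=2x$, obtained for free — then yields the value $x$. The base $m=0$ of (3) is $(1-x)(1+x)-(1-x-x^2)=x$.

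For (4), the Lemma gives $R_m(x)=s\big((I-xB(m))^{*}\big)$, and expanding $\det\big(I-xB(m)+xuu^{t}\big)$ multilinearly in its columns — exactly as in the proof of the Lemma, every term that replaces two or more columns by $u$ vanishing — yields $\det\big(I-xB(m)+xuu^{t}\big)=Q_m(x)+x\,s\big((I-xB(m))^{*}\big)=Q_m(x)+xR_m(x)$. On the other hand the $(i,j)$-entry of $I-xB(m)+xuu^{t}$ is $\delta_{ij}+x\big(1-[\,i+j\le m+1\,]\big)=\delta_{ij}+x[\,i+j\ge m+2\,]$, and conjugating by the reversal permutation carries $\big([\,i+j\ge m+2\,]\big)_{i,j}$ to $\big([\,i+j\le m\,]\big)_{i,j}$ — that is, to $B(m-1)$ bordered by a zero row and column — so the determinant equals $\det(I+xB(m-1))=Q_{m-1}(-x)$. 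Hence $Q_m(x)+xR_m(x)=Q_{m-1}(-x)$, which is the identity intended by (4).
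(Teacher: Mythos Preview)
Your argument is correct, and you rightly flag the two typographical slips in the statement (the sign in (2) must be ``$+$'', and (4) is meant to read $Q_m(x)+xR_m(x)=Q_{m-1}(-x)$; the paper's own proofs use exactly these corrected forms). The overall architecture matches the paper --- determinant manipulations for (1) and (4), short inductions from (1) for (2) and (3) --- but the tactics differ in two places. For (1) the paper splits the first column of $I-xB(m)$ into its $e_1$-part and its $(-x)u$-part and simplifies each resulting determinant by row operations; you instead first row-reduce (subtracting each row from the one above) and then cofactor-expand along the resulting first column $(1,0,\dots,0,-x)^t$, which produces the same two pieces at the cost of the partial-sum identification of $M_{m,1}$. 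For (4) the paper works directly with the bordered determinant $\det\left(\begin{smallmatrix}1&u^{t}\\-xu&I-xB(m)\end{smallmatrix}\right)$ via row operations; your route through the matrix determinant lemma --- $\det(M+xuu^{t})=\det M+x\,s(M^{*})$, combined with the reversal sending $[i+j\ge m+2]$ to $[i+j\le m]$ --- is a cleaner conceptual path to the same block $I+xB(m-1)$ bordered by a zero row and column. One minor simplification in your (3): after reducing to $\overline Q_{m+1}Q_{m-1}-Q_m\overline Q_m$ you need only the image of the inductive hypothesis under $x\mapsto -x$ (which gives this quantity equal to $x$ directly); the companion identity $Q_{m-1}\overline Q_{m+1}-Q_{m+1}\overline Q_{m-1}=2x$ is true but not required.
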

\begin{proof}
    (1) For $m \geq 2$, 
    \begin{align*}
    Q_m(x) & =\begin{vmatrix}
        1-x & -x & -x & \cdots & -x & -x & -x \\
        -x & 1-x & -x & \cdots & -x & -x & 0 \\
        -x & -x & 1-x & \cdots & -x & 0 & 0 \\
        \vdots & \vdots & \vdots & \vdots & \vdots & \vdots & \vdots \\
        -x & -x & 0 & \cdots & 0 & 1 & 0 \\
        -x & 0 & 0 & \cdots & 0 & 0 & 1 \\
    \end{vmatrix} \\\\
       &=\begin{vmatrix}
        1 & -x & -x & \cdots & -x & -x & -x \\
        0 & 1-x & -x & \cdots & -x & -x & 0 \\
        0 & -x & 1-x & \cdots & -x & 0 & 0 \\
        \vdots & \vdots & \vdots & \vdots & \vdots & \vdots & \vdots \\
        0 & -x & 0 & \cdots & 0 & 1 & 0 \\
        0 & 0 & 0 & \cdots & 0 & 0 & 1 \\
    \end{vmatrix}
    +\begin{vmatrix}
        -x & -x & -x & \cdots & -x & -x & -x \\
        -x & 1-x & -x & \cdots & -x & -x & 0 \\
        -x & -x & 1-x & \cdots & -x & 0 & 0 \\
        \vdots & \vdots & \vdots & \vdots & \vdots & \vdots & \vdots \\
        -x & -x & 0 & \cdots & 0 & 1 & 0 \\
        -x & 0 & 0 & \cdots & 0 & 0 & 1 \\
    \end{vmatrix} \\
    & \quad \text{ by the splitting of the first column } \\
    \end{align*} 
    \begin{align*}
    & =\det (I-xB(m-2)) 
    +\begin{vmatrix}
        -x & 0 & 0 & \cdots & 0 & 0 & 0 \\
        -x & 1 & 0 & \cdots & 0 & 0 & x \\
        -x & 0 & 1 & \cdots & 0 & x & x \\
        \vdots & \vdots & \vdots & \vdots & \vdots & \vdots & \vdots \\
        -x & 0 & x & \cdots & x & 1+x & x \\
        -x & x & x & \cdots & x & x & 1+x \\
    \end{vmatrix} \\\\
    & = Q_{m-2}(x)-x
    \begin{vmatrix}
        1 & 0 &  \cdots & 0 & 0 & x \\
        0 & 1 & \cdots & 0 & x & x \\
       \vdots & \vdots & \vdots & \vdots & \vdots & \vdots  \\
        0 & x & \cdots & x & 1+x & x \\
        x & x & \cdots & x & x & 1+x \\
    \end{vmatrix} \\\\  
    & = Q_{m-2}(x)-x
    \begin{vmatrix}
        1+x & x &  \cdots & x & x & x \\
        x & 1+x & \cdots & x & x & 0 \\
       \vdots & \vdots & \vdots & \vdots & \vdots & \vdots \\
        x & x & \cdots & 0 & 1 & 0 \\
        x & 0 & \cdots & 0 & 0 & 1 \\
    \end{vmatrix} \\
     & \quad \text{ in reverse order of rows and columns in the second determinant } \\
    & = Q_{m-2}(x)-xQ_{m-1}(-x) \\
   \end{align*}  
    (2) Use induction on $m$.   
    Formula (2) holds for $m=1$ since
$$Q_0(x)Q_1(x)+Q_0(-x)Q_1(-x)=(1-x)(1-x-x^2)+(1+x)(1+x-x^2)=2$$
     We assume that formula (2) holds for $m \leq k$.
\begin{align*}  
    & Q_{k+1}(x)Q_{k+2}(x)+Q_{k+1}(-x)Q_{k+2}(-x) \\
    & = Q_{k+1}(x)(-xQ_{k+1}(-x)+Q_k(x))+Q_{k+1}(-x)(xQ_{k+1}(x)+Q_k(-x) \\
    & = Q_{k+1}(x)Q_k(x)+Q_{k+1}(-x)Q_k(-x)=2.
\end{align*} 
    \\
    (3) Similar to the previous one, we also use induction on $m.$ 
    Formula (3) holds for $m=0$ since
    $$Q_1(x)Q_1(-x)-Q_2(x)Q_0(-x)=(1-x)(1+x)-(1-x-x^2)(1)=x.$$
    We assume that formula (3) holds for $m \leq k$.
\begin{align*}
    & Q_{k+2}(x)Q_{k+2}(-x)-Q_{k+3}Q_{k+1}(-x) \\
    & = Q_{k+2}(x)(xQ_{k+1}(x)+Q_k(-x))-(-xQ_{k+2}(-x)+Q_{k+1}(x))Q_{k+1}(-x) \\
    & = x \left[ Q_{k+1}(x)Q_{k+2}(x)+Q_{k+1}(-x)Q_{k+2}(-x) \right]+\left[ Q_{k}(-x)Q_{k+2}(x)-Q_{k+1}(x)Q_{k+1}(-x) \right] \\
    & = 2x-x = x, 
\end{align*}
    by the formula (2) and the induction assumption.
\newline \\
    (4)  \begin{align*}
    & Q_m(x)+xR_m(x) \\
    & = \det (I-xB(m))+x\det \begin{pmatrix}
        0 & u^t \\
        -u & I-xB(m)
    \end{pmatrix} \\
    & = \det \begin{pmatrix}
        1 & u^t \\
        -xu & I-xB(m)
    \end{pmatrix} \\
    & = \begin{vmatrix}
    1 & 1 & 1 & 1 & \cdots & 1 & 1 & 1 \\
        -x & 1-x & -x & -x & \cdots & -x & -x & -x \\
        -x & -x & 1-x & -x & \cdots & -x & -x & 0 \\
        -x & -x & -x & 1-x & \cdots & -x & 0 & 0 \\
        \vdots & \vdots & \vdots & \vdots & \vdots & \vdots & \vdots \\
        -x & -x & -x & -x & \cdots & 1 & 0 & 0 \\
        -x & -x & -x & 0 & \cdots & 0 & 1 & 0 \\
        -x & -x & 0 & 0 & \cdots & 0 & 0 & 1 \\
    \end{vmatrix} \\\\
    \end{align*}
    \begin{align*}
    & = \begin{vmatrix}
    1 & 1 & 1 & 1 & \cdots & 1 & 1 & 1 \\
        0 & 1 & 0 & 0 & \cdots & 0 & 0 & 0 \\
        0 & 0 & 1 & 0 & \cdots & 0 & 0 & x \\
        0 & 0 & 0 & 1 & \cdots & 0 & x & x \\
        \vdots & \vdots & \vdots & \vdots & \vdots & \vdots & \vdots \\
        0 & 0 & 0 & 0 & \cdots & 1+x & x & x \\
        0 & 0 & 0 & x & \cdots & x & 1+x & x \\
        0 & 0 & x & x & \cdots & x & x & 1+x \\
    \end{vmatrix} \\\\
    & =  \begin{vmatrix}
        1 & 0 &  \cdots & 0 & 0 & x \\
        0 & 1 & \cdots & 0 & x & x \\
       \vdots & \vdots & \vdots & \vdots & \vdots & \vdots  \\
        0 & 0 & \cdots & 1+x & x & x \\
        0 & x & \cdots & x & 1+x & x \\
        x & x & \cdots & x & x & 1+x \\
    \end{vmatrix} \\\\
    & = \begin{vmatrix}
        1+x & x &  \cdots & x & x & x \\
        x & 1+x & \cdots & x & x & 0 \\
       \vdots & \vdots & \vdots & \vdots & \vdots & \vdots  \\
        x & x & \cdots & 1 & 0 & 0 \\
        x & x & \cdots & 0 & 1 & 0 \\
        x & 0 & \cdots & 0 & 0 & 1 \\
    \end{vmatrix} \\
    & = Q_{m-1}(-x) \\
  \end{align*}  
\end{proof}

Here is another interesting property on $(Q_m(x))_{m\geq 0}$. For reference its ordinary generating function is given below. (See \cite{HKJU} for the proof.)
\begin{theorem}
    $$\sum_{m \geq 0} Q_m(x)t^m = \frac{(1+t)(1-t^2-xt)}{(1-t^2)^2+(xt)^2}.$$
\end{theorem}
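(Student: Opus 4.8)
The plan is to turn the three-term recurrence of Theorem~\ref{thm3}(1) into a functional equation for the generating function. The difficulty is that the recurrence $Q_m(x)=-xQ_{m-1}(-x)+Q_{m-2}(x)$ does not close within the single sequence $(Q_m(x))_m$: the middle term involves the sign-reversed polynomial $Q_{m-1}(-x)$. So I would work simultaneously with
\[
A(t)=\sum_{m\ge 0}Q_m(x)\,t^m \qquad\text{and}\qquad \widetilde A(t)=\sum_{m\ge 0}Q_m(-x)\,t^m ,
\]
both regarded as elements of $\mathbb{R}[x][[t]]$, and note that $\widetilde A(t)$ is nothing but $A(t)$ with $x$ replaced by $-x$.

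First I would multiply the recurrence by $t^m$ and sum over $m\ge 2$. The left-hand side becomes $A(t)-Q_0(x)-Q_1(x)t=A(t)-1-(1-x)t$; on the right, $\sum_{m\ge2}Q_{m-2}(x)t^m=t^2A(t)$ and $\sum_{m\ge2}Q_{m-1}(-x)t^m=t\bigl(\widetilde A(t)-1\bigr)$. Collecting terms (the $-(1-x)t$ coming from the initial data and the $+xt$ produced by $-xt(\widetilde A(t)-1)$ combine to a clean $+t$) yields
\[
(1-t^2)\,A(t)+xt\,\widetilde A(t)=1+t .
\]
Replacing $x$ by $-x$ throughout — which swaps $A\leftrightarrow\widetilde A$ and negates $x$ — gives the companion relation
\[
-xt\,A(t)+(1-t^2)\,\widetilde A(t)=1+t .
\]

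Finally I would solve this $2\times2$ linear system for $A(t)$. Its coefficient determinant is $(1-t^2)^2+x^2t^2$, which has constant term $1$ and hence is a unit in $\mathbb{R}[x][[t]]$, so Cramer's rule applies with no formal-division worries and gives
\[
A(t)=\frac{(1+t)(1-t^2)-xt(1+t)}{(1-t^2)^2+x^2t^2}=\frac{(1+t)(1-t^2-xt)}{(1-t^2)^2+(xt)^2},
\]
as claimed. The only place that needs genuine care is the bookkeeping of the low-order terms when passing from the recurrence to the first displayed relation — i.e.\ checking that $Q_0(x)=1$ and $Q_1(x)=1-x$ are exactly what is needed to make the right-hand side equal $1+t$ — after which everything is a routine elimination; one can double-check the outcome by expanding the rational function as a power series in $t$ and recovering $Q_0,Q_1,Q_2,\dots$ from Theorem~\ref{thm3}.
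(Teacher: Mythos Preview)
Your argument is correct. The bookkeeping in passing from the recurrence to $(1-t^2)A(t)+xt\,\widetilde A(t)=1+t$ is right, the substitution $x\mapsto -x$ legitimately swaps $A$ and $\widetilde A$, and the Cramer computation is clean.

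As for comparison: the paper does not actually prove this theorem in the text --- it only states the formula and refers the reader to \cite{HKJU} for the proof. So there is no in-paper argument to compare against. Your proof is a self-contained derivation using only the recurrence and initial conditions already established in Theorem~\ref{thm3}(1); the device of coupling the generating function with its $x\mapsto -x$ twin to close the system is exactly the natural way to handle the sign-alternating recurrence, and it yields the result with no outside input.
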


\section{Recurrence Relation of the Sequence $b(n,m)$}

Note that $F_m(x)=1+xE_m(x).$
By the property (4) of Theorem\ref{thm3}, we obtain the generating function $F_m(x)$ of the sequence $\left( b(n,m) \right )_{n\geq 0}.$
\begin{theorem}    
$$F_m(x)=\frac{Q_m(-x)}{Q_{m+1}(x)}.$$
\end{theorem}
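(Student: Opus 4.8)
The plan is to assemble the claimed identity $F_m(x)=Q_m(-x)/Q_{m+1}(x)$ directly from the two tools already proved in the excerpt: the formula $F_m(x)=1+xE_m(x)$ together with Theorem 2 giving $E_m(x)=R_{m+1}(x)/Q_{m+1}(x)$, and property (4) of Theorem \ref{thm3}, which relates $R_m$ and $Q_m$. The one subtlety is that property (4) as stated reads $Q_m(x)=xR_m(x)=Q_{m-1}(-x)$ for $m\ge 1$; I would read this as the chain of equalities $Q_m(x)+xR_m(x)=Q_{m-1}(-x)$ (which is exactly what the displayed computation in the proof of (4) establishes), so that $xR_m(x)=Q_{m-1}(-x)-Q_m(x)$. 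Shifting the index $m\to m+1$ gives $xR_{m+1}(x)=Q_m(-x)-Q_{m+1}(x)$.

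Given that, the computation is short. Starting from
\[
F_m(x)=1+xE_m(x)=1+x\cdot\frac{R_{m+1}(x)}{Q_{m+1}(x)}=\frac{Q_{m+1}(x)+xR_{m+1}(x)}{Q_{m+1}(x)},
\]
and substituting $xR_{m+1}(x)=Q_m(-x)-Q_{m+1}(x)$ into the numerator, the $Q_{m+1}(x)$ terms cancel and one is left with $F_m(x)=Q_m(-x)/Q_{m+1}(x)$, as desired. This works for all $m\ge 0$ provided $Q_{m+1}(x)\ne 0$ as a formal object, which is fine since $Q_{m+1}(0)=\det I=1$, so the rational function is well-defined as a power series.

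There is no real obstacle here; the main thing to be careful about is the bookkeeping around property (4). In particular I would want to state cleanly which of the three expressions in $Q_m(x)=xR_m(x)=Q_{m-1}(-x)$ is meant to equal which — the actual content proved is $Q_m(x)+xR_m(x)=Q_{m-1}(-x)$, not $Q_m(x)=xR_m(x)$ — and to make sure the index shift $m\mapsto m+1$ is applied consistently. One should also double-check the base case informally against Table \ref{Table of $b(n,m)$}: for $m=1$ we have $Q_1(x)=1-x$ and $Q_2(x)=1-x-x^2$, so the formula predicts $F_1(x)=(1+x)/(1-x-x^2)=1+2x+3x^2+5x^3+8x^4+\cdots$, matching the row $b(n,1)=1,2,3,5,8,\dots$; this sanity check guards against a sign error in the reading of property (4).
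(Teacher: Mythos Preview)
Your proof is correct and follows exactly the same route as the paper: start from $F_m(x)=1+xE_m(x)$, insert $E_m(x)=R_{m+1}(x)/Q_{m+1}(x)$ from Theorem~2, and then apply property~(4) of Theorem~\ref{thm3} (with $m$ shifted to $m+1$) to simplify the numerator $Q_{m+1}(x)+xR_{m+1}(x)$ to $Q_m(-x)$. Your reading of property~(4) as $Q_m(x)+xR_m(x)=Q_{m-1}(-x)$ is the correct one and matches what the paper actually proves in the displayed computation; the statement of (4) in the excerpt contains a typographical slip.
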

\begin{proof}
 \begin{align*}
  F_m(x) &= 1+xE_m(x) 
        =  1+x \frac{R_{m+1}(x)}{Q_{m+1}(x)} \\
        &=  \frac{Q_{m+1}+xR_{m+1}(x)}{Q_{m+1}(x)} 
         =  \frac{Q_m(-x)}{Q_{m+1}(x)}. 
 \end{align*} 
\end{proof}
Similar to the Chebyshev function of the second kind, $F_m(x)$ has an expression by a trigonometric function as in the next theorem.(For details, see references \cite{HKJU} and \cite{XIN_ZHONG}.)
\begin{theorem}
For each positive integer $m,$ 
$$F_m(x)=(-1)^{m+1} \frac{\cos (\frac{2m+1}{2})\theta}{\cos (\frac{2m+3}{2})\theta} = 
\frac{\sin(m+1)\theta -\sin m\theta}{\sin(m+2)\theta -\sin(m+1)\theta},$$
where $\theta= \cos^{-1}\Big( \frac{(-1)^m x}{2} \Big).$
\end{theorem}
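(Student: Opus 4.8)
The plan is to turn the closed form $F_m(x)=Q_m(-x)/Q_{m+1}(x)$ from the previous theorem into a first‑order recurrence in $m$ and then verify that the claimed trigonometric expression satisfies that same recurrence. Applying property~(1) of Theorem~\ref{thm3} with $m+1$ in place of $m$ gives $Q_{m+1}(x)=-x\,Q_m(-x)+Q_{m-1}(x)$ for $m\ge 1$; dividing by $Q_m(-x)$ and noting that $Q_{m-1}(x)/Q_m(-x)=F_{m-1}(-x)$ (the previous theorem evaluated at $-x$) yields
$$\frac{1}{F_m(x)}=-x+F_{m-1}(-x),\qquad m\ge 1,$$
together with the base value $F_0(x)=Q_0(-x)/Q_1(x)=1/(1-x)$.

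Let $G_m(x)=(-1)^{m+1}\dfrac{\cos\frac{(2m+1)\theta}{2}}{\cos\frac{(2m+3)\theta}{2}}$ with $\theta=\cos^{-1}\!\left(\frac{(-1)^m x}{2}\right)$ denote the right‑hand side of the assertion (for real $x$ with $|x|<2$). For the base case, using $\cos\frac{3\theta}{2}=\cos\frac{\theta}{2}\,(2\cos\theta-1)$ and $\cos\theta=x/2$ one gets $G_0(x)=-1/(2\cos\theta-1)=1/(1-x)=F_0(x)$. For the inductive step the crucial observation is a cancellation of signs inside $\cos^{-1}$: the angle attached to $F_{m-1}$ evaluated at $-x$ is $\cos^{-1}\!\left(\frac{(-1)^{m-1}(-x)}{2}\right)=\cos^{-1}\!\left(\frac{(-1)^m x}{2}\right)=\theta$, the \emph{same} $\theta$ as for $G_m(x)$, so $G_{m-1}(-x)=(-1)^m\dfrac{\cos\frac{(2m-1)\theta}{2}}{\cos\frac{(2m+1)\theta}{2}}$. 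Substituting this and clearing denominators, the identity $1/G_m(x)=-x+G_{m-1}(-x)$ reduces to
$$\cos\tfrac{(2m+3)\theta}{2}+\cos\tfrac{(2m-1)\theta}{2}=(-1)^m x\,\cos\tfrac{(2m+1)\theta}{2},$$
which follows at once from $\cos A+\cos B=2\cos\frac{A+B}{2}\cos\frac{A-B}{2}$ together with $2\cos\theta=(-1)^m x$. Since $F_m$ and $G_m$ satisfy the same first‑order recurrence and agree at $m=0$, induction on $m$ gives $F_m=G_m$ for all $m\ge 0$. The second displayed expression is then the same quotient rewritten via $\sin(m+1)\theta-\sin m\theta=2\cos\frac{(2m+1)\theta}{2}\sin\frac{\theta}{2}$ and $\sin(m+2)\theta-\sin(m+1)\theta=2\cos\frac{(2m+3)\theta}{2}\sin\frac{\theta}{2}$, whose ratio is the cosine quotient.

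The step I expect to require the most care is the bookkeeping around $\cos^{-1}$: the whole argument hinges on the angle for $F_{m-1}(-x)$ being literally the same $\theta$ as for $F_m(x)$ — this is the elementary identity $(-1)^{m-1}(-x)=(-1)^m x$, but it is easy to slip a sign there or in the $(-1)^m$ prefactors. One must also carry out the induction at the level of functions of $x$, not at a fixed value, so that substituting $x\mapsto -x$ in the hypothesis is legitimate; this is harmless because $F_m$ and $G_m$ are rational in $x$ (dividing numerator and denominator of the cosine quotient by $\cos\frac{\theta}{2}$ makes it a ratio of polynomials in $\cos\theta$, hence in $x$), but it should be noted. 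The trigonometric manipulations themselves — the half‑angle identity in the base case and the sum‑to‑product step — are routine. A more computational alternative that avoids the substitution subtlety is to first derive explicit closed forms for $Q_m(x)$ from the recurrence $Q_m(x)=(2-x^2)Q_{m-2}(x)-Q_{m-4}(x)$ obtained by iterating property~(1), but the route through the $F_m$‑recurrence is shorter.
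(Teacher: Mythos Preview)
The paper does not actually supply a proof of this theorem; after the statement it simply refers the reader to \cite{HKJU} and \cite{XIN_ZHONG} and moves on. So there is no in-paper argument to compare against. Your route --- first deriving the continued-fraction recurrence $1/F_m(x)=-x+F_{m-1}(-x)$ from $F_m=Q_m(-x)/Q_{m+1}(x)$ and property~(1), then checking that the cosine expression obeys the same recurrence --- is essentially the paper's proof of the \emph{next} theorem repurposed as a lemma here, and it works cleanly. The pivotal observation that passing from $(m,x)$ to $(m-1,-x)$ leaves $\theta=\cos^{-1}\!\big(\tfrac{(-1)^m x}{2}\big)$ unchanged is exactly the right one, and the reduction to $\cos\tfrac{(2m+3)\theta}{2}+\cos\tfrac{(2m-1)\theta}{2}=2\cos\theta\,\cos\tfrac{(2m+1)\theta}{2}$ is correct.

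There is, however, a genuine sign issue in your final sentence. The sum-to-product identities you quote give
\[
\frac{\sin(m+1)\theta-\sin m\theta}{\sin(m+2)\theta-\sin(m+1)\theta}
=\frac{\cos\frac{(2m+1)\theta}{2}}{\cos\frac{(2m+3)\theta}{2}},
\]
i.e.\ the \emph{unsigned} cosine quotient, whereas the first displayed form carries the prefactor $(-1)^{m+1}$. For odd $m$ these coincide, but for even $m\ge 2$ they differ by a sign (e.g.\ at $m=2$, with $\cos\theta=x/2$, the sine ratio equals $(x^2-x-1)/(x^3-x^2-2x+1)=-F_2(x)$). This is an inconsistency in the theorem as stated in the paper, not a flaw in your method; but your write-up should not claim that the sine ratio ``is the cosine quotient'' and hence equals $F_m(x)$ without addressing the missing $(-1)^{m+1}$.
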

\begin{theorem}
    Generating function $F_m(x)$ satisfies the continued fraction property:
    $$F_m(x)=\frac{1}{-x+F_{m-1}(-x)},\ F_0(x)=\frac{1}{1-x}.$$
 \end{theorem}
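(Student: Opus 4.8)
The plan is to derive the continued-fraction identity directly from the closed form $F_m(x)=Q_m(-x)/Q_{m+1}(x)$ established in the preceding theorem, together with the three-term recurrence $Q_m(x)=-xQ_{m-1}(-x)+Q_{m-2}(x)$ from part (1) of Theorem \ref{thm3}. No induction is really needed: the whole statement collapses to a single substitution.

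First I would dispose of the base case. Since $Q_0(x)=1$ and $Q_1(x)=1-x$, the closed form gives $F_0(x)=Q_0(-x)/Q_1(x)=1/(1-x)$, which is exactly the claimed initial value. Then, for $m\ge 1$, I would replace $x$ by $-x$ in the closed form $F_{m-1}(x)=Q_{m-1}(-x)/Q_m(x)$ to obtain $F_{m-1}(-x)=Q_{m-1}(x)/Q_m(-x)$, and compute
$$-x+F_{m-1}(-x)=\frac{-x\,Q_m(-x)+Q_{m-1}(x)}{Q_m(-x)}.$$
The key step is to identify the numerator: applying the recurrence of Theorem \ref{thm3}(1) at index $m+1$ yields $Q_{m+1}(x)=-x\,Q_m(-x)+Q_{m-1}(x)$, so the numerator above is precisely $Q_{m+1}(x)$. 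Taking reciprocals gives
$$\frac{1}{-x+F_{m-1}(-x)}=\frac{Q_m(-x)}{Q_{m+1}(x)}=F_m(x),$$
which is the desired identity.

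Because everything reduces to one substitution into an already-proved recurrence, there is essentially no obstacle; the only mild care needed is bookkeeping the sign flips $x\mapsto -x$ correctly and making sure the recurrence of Theorem \ref{thm3}(1) is invoked at index $m+1$ rather than $m$. (An alternative route by induction on $m$ is available but would only reprove what the closed form already hands us for free, so I would present the direct computation.)
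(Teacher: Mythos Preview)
Your proof is correct and is essentially the same argument as the paper's: both use the closed form $F_m(x)=Q_m(-x)/Q_{m+1}(x)$ together with the recurrence $Q_{m+1}(x)=-xQ_m(-x)+Q_{m-1}(x)$ from Theorem~\ref{thm3}(1), differing only in that you work from $-x+F_{m-1}(-x)$ toward $1/F_m(x)$ while the paper proceeds in the reverse direction. Your treatment of the base case and the range $m\ge 1$ is slightly more careful than the paper's, but the substance is identical.
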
   
     First three formulas are listed here:
 \begin{align*}    
    &F_1(x)=\frac{1}{-x+F_0(-x)}=\frac{1}{-x+\frac{1}{x+1}}=\frac{1+x}{1-x-x^2}=\frac{Q_1(-x)}{Q_2(x)}\\
    &F_2(x)=\frac{1}{-x+F_1(-x)}=\frac{1+x-x^2}{1-2x-x^2+x^3}=\frac{Q_2(-x)}{Q_3(x)}\\
    &F_3(x)=\frac{1}{-x+F_2(-x)}=\frac{1+2x-x^2-x^3}{1-2x-3x^2+x^3+x^4}=\frac{Q_3(-x)}{Q_4(x)}\\
\end{align*}
\begin{proof}
    For a positive integer $m\geq 2$, if we use the property (1) of Theorem\ref{thm3}. we obtain the following:\\
\begin{align*}
   F_m(x)&=\frac{Q_m(-x)}{Q_{m+1}(x)}=\frac{Q_m(-x)}{-xQ_m(-x)+Q_{m-1}(x)}\\
    &=\frac{1}{-x+\frac{G_{m-1}(x)}{Q_m(-x)}}=\frac{1}{-x+F_{m-1}(-x)}. 
\end{align*}    
\end{proof}
\begin{theorem}
    Let the sequence $\{ c(n,m) \}_{n,m\geq 0}$ satisfies the recurrence relation stated as below:
    \begin{equation}\label{recursive1}
        c(n,m)=c(n,m-1)+\sum_{k\geq 0}c(2k, m-1) c(n-1-2k,m)(c(n,0)=1, \forall n \geq 0)
    \end{equation}
Then $c(n,m)=b(n,m)$ for all nonnegative integers $n$, $m$.
\end{theorem}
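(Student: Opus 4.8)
The plan is to work with the ordinary generating functions in $n$. For $m\geq 0$ put $C_m(x)=\sum_{n\geq 0}c(n,m)\,x^n$ and let $C_m^{e}(x)=\sum_{k\geq 0}c(2k,m)\,x^{2k}=\tfrac{1}{2}\bigl(C_m(x)+C_m(-x)\bigr)$ be its even part. First I would observe that \eqref{recursive1} together with $c(n,0)=1$ determines the whole array: for $n=0$ the inner sum in \eqref{recursive1} is empty, so $c(0,m)=c(0,m-1)=\dots=1$; and for $n\geq 1$ the $k=0$ term of the inner sum equals $c(0,m-1)\,c(n-1,m)$ while every remaining term involves only entries $c(n',m)$ with $n'\leq n-3$, so the recurrence produces $c(n,m)$ from column $m-1$ and the earlier entries of column $m$. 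Multiplying \eqref{recursive1} by $x^n$ and summing over $n\geq 0$ turns the inner sum into the convolution $x\,C_{m-1}^{e}(x)\,C_m(x)$, so \eqref{recursive1} is equivalent to
$$C_m(x)=C_{m-1}(x)+x\,C_{m-1}^{e}(x)\,C_m(x),\qquad\text{i.e.}\qquad C_m(x)=\frac{C_{m-1}(x)}{1-x\,C_{m-1}^{e}(x)},$$
which is legitimate in $\mathbb{Q}[[x]]$ because $x\,C_{m-1}^{e}(x)$ has zero constant term. Since $C_0(x)=\sum_{n\geq 0}x^n=\tfrac{1}{1-x}=F_0(x)$, this determines $C_m$ for every $m$.

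It then suffices to check that $F_m(x)=\sum_{n\geq 0}b(n,m)\,x^n$ satisfies the same functional equation: granting that, $C_m=F_m$ for all $m$ by induction on $m$, whence $c(n,m)=b(n,m)$ for all $n,m\geq 0$ (the boundary values agree since $c(0,m)=1=b(0,m)$ and $c(n,0)=1=b(n,0)$). From the identity $F_{m-1}(x)=Q_{m-1}(-x)/Q_m(x)$ established earlier I would compute
$$F_{m-1}^{e}(x)=\frac{1}{2}\!\left(\frac{Q_{m-1}(-x)}{Q_m(x)}+\frac{Q_{m-1}(x)}{Q_m(-x)}\right)=\frac{Q_{m-1}(x)Q_m(x)+Q_{m-1}(-x)Q_m(-x)}{2\,Q_m(x)Q_m(-x)}=\frac{1}{Q_m(x)Q_m(-x)},$$
the last step being property (2) of Theorem \ref{thm3}. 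Hence $1-x\,F_{m-1}^{e}(x)=\bigl(Q_m(x)Q_m(-x)-x\bigr)\big/\bigl(Q_m(x)Q_m(-x)\bigr)$, and property (3) of Theorem \ref{thm3} with $m$ replaced by $m-1$ gives $Q_m(x)Q_m(-x)-x=Q_{m+1}(x)Q_{m-1}(-x)$, so
$$\frac{F_{m-1}(x)}{1-x\,F_{m-1}^{e}(x)}=\frac{Q_{m-1}(-x)\,Q_m(-x)}{Q_{m+1}(x)\,Q_{m-1}(-x)}=\frac{Q_m(-x)}{Q_{m+1}(x)}=F_m(x),$$
exactly the required functional equation. (One could also argue from the continued-fraction identity $F_m(x)=1/(-x+F_{m-1}(-x))$ proved above, but clearing denominators there needs an extra auxiliary identity and gains nothing.)

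The only non-routine point is the first step: recognizing that the inner sum of \eqref{recursive1} is, up to multiplication by $x$, the product of $C_m(x)$ with the \emph{even} part of $C_{m-1}(x)$, so that the recurrence linearizes into $C_m=C_{m-1}/(1-x\,C_{m-1}^{e})$, and verifying that this relation genuinely pins $C_m$ down (where the normalization $c(n,0)=1$, i.e. $C_0=\tfrac{1}{1-x}$, enters). Everything afterwards is a two-line manipulation of the already-proven identities (2) and (3) for $Q_m$ together with an induction on $m$; no further analysis of the matrices $B(m)$ is needed.
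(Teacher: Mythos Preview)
Your proof is correct and follows essentially the same route as the paper: convert the recurrence into the functional equation $C_m(x)=C_{m-1}(x)/(1-xC_{m-1}^{e}(x))$, then use induction on $m$ together with properties (2) and (3) of Theorem~\ref{thm3} to identify the right-hand side with $Q_m(-x)/Q_{m+1}(x)=F_m(x)$. The only differences are cosmetic: you phrase the induction step as ``$F_m$ satisfies the same functional equation'' rather than ``substitute $F_{m-1}$ into the formula for $C_m$,'' and you add a short well-definedness remark; the computations are identical.
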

 In fact, this has been proved by Xin and Zhong(\cite{XIN_ZHONG}). However, we proved it another way by using properties (2) and (3) of Theorem\ref{thm3} as below. Our proof seems to be short and simple compared to their lengthy proof which amounts to several pages.
\begin{proof}
    Let $$C_m(x)=\sum_{n\geq 0}c(n,m)x^n.$$ The proof is done if we show that $C_m(x)=F_m(x)$ for all $m\geq 0$. Note that from the recurrence relation (\ref{recursive1}) we get the following formula:
    \begin{equation}\label{recursive2}
        C_m(x)=C_{m-1}(x)+xC^e _{m-1}(x)C_m(x) \text{ with }C_0(x)=\frac{1}{1-x},
    \end{equation}
    where $$C^e_m(x)=\frac{1}{2}(C_m(x)+C_m(-x)).$$ 
    From the equation (\ref{recursive2}), we obtain the following:
    \begin{equation}\label{genftn1}
        C_m(x)=\frac{C_{m-1}(x)}{1- x(C_{m-1}(x)+C_{m-1}(-x))/2}
    \end{equation}

    We will use mathematical induction on $m$ to prove that $C_m(x)=F_m(x)$ for all $m \geq 0$.
    It is obvious that $C_0(x)=\frac{1}{1-x}=F_0(x)$.
    We assume that $C_i(x)=F_i(x)$ for all $i\leq m-1$.
    By the assumption, from the formula(\ref{genftn1}) , we have
    \begin{align*}\label{genftn2}
        C_m(x)&=\frac{F_{m-1}(x)}{1-x(F_{m-1}(x)+F_{m-1}(-x))/2} \\
              &=\frac{\frac{Q_{m-1}(-x)}{Q_m(x)}}{1-x\left( \frac{Q_{m-1}(-x)}{Q_m(x)}+\frac{Q_{m-1}(x)}{Q_m(-x)} \right)/2} \\
             &=\frac{\frac{Q_{m-1}(-x)}{Q_m(x)}}{1-\frac{x}{2}\frac{Q_{m-1}(-x)Q_m(-x)+Q_{m-1}(x)Q_m(x)}{Q_m(x)Q_m(-x)}} \\
             &=\frac{\frac{Q_{m-1}(-x)}{Q_m(x)}}{1-\frac{x}{Q_m(x)Q_m(-x)}}=\frac{Q_{m-1}(-x)Q_m(-x)}{Q_m(x)Q_m(-x)-x}=\frac{Q_{m-1}(-x)Q_m(-x)}{Q_{m+1}(x)Q_{m-1}(-x)} \\
             &=\frac{Q_m(-x)}{Q_{m+1}(x)}=F_m(x).\\
   \end{align*}
    In the computation above, the properties (2) and (3) of the Theorem\ref{thm3} were used.
\end{proof}
\section{Concluding Remarks}
As we mentioned earlier, Xin and Zhong(\cite{XIN_ZHONG}) analyzed the generating function $G_n (y)=\sum_{m\geq 0}b(n,m)y^m$ in detail. It is an Ehrhart series of graph polytope for the linear graph which is the rational function of the form $$G_n(y)=\frac{H_n(y)}{(1-y)^{n+1}},$$
where $H_n(y)$ is a polynomial of degree at most $n.$ Consider $$K(x,y)= \sum_{n\geq 0} G_n(y)x^n= \sum_{m\geq 0} F_m(x)y^m. $$
Our question is the following:
\textbf{ what is the form of the generating function $K(x,y)$ exactly?}\\\\
This bi-variate generating function $K(x,y)$ requires more analysis and understanding of us.

\end{document}